\sloppy \pagestyle{plain}
\newcounter{cequation}[section]
\newtheorem{theorem}[cequation]{Theorem}
\newtheorem*{theorem*}{Theorem}
\newtheorem{lemma}[cequation]{Lemma}
\newtheorem{proposition}[cequation]{Proposition}
\newtheorem{problem}[cequation]{Problem}
\newtheorem{lemma-definition}[cequation]{Lemma-Definition}
\theoremstyle{definition}
\newtheorem{example}[cequation]{Example}
\newtheorem{definition}[cequation]{Definition}
\newtheorem*{definition*}{Definition}
\theoremstyle{remark}
\newtheorem{remark}[cequation]{Remark}
\makeatletter\@addtoreset{equation}{section}
\def \G {\mathcal{G}}
\def \X {\mathcal{X}}
\def \Y {\mathcal{Y}}
\def \O {\mathcal{O}}
\def \QQ {\mathcal{Q}}
\def \R {\mathbb{R}}
\def \C {\mathbb{C}}
\def \CC {\mathbb{C}}
\def \TT {\mathbb{T}}
\def \ZZ {\mathbb{Z}}
\def \Ar {\mathrm{Ar}}
\def \Ver {\mathrm{Ver}}
\def \wt {\mathrm{wt}}
\def \vv {\mathbf{v}}
\def \hh {\mathbf{h}}
\def \HB {\mathrm{HB}}
\def \MB {\mathrm{MB}}
\def \VB {\mathrm{VB}}
\def \G {\mathrm{Gr}}
\newcommand{\Spec}{\mathrm{Spec}\,}
\newcommand{\fund}{\mathbf{1}}
\newcommand{\arrow}[2]{\langle #1\to #2 \rangle}
\def \ge {\geqslant}
\def \le {\leqslant}
\title{Laurent phenomenon for Landau--Ginzburg models of complete intersections in Grassmannians}
\author{Victor Przyjalkowski, Constantin Shramov}
\thanks{
This work was performed in Steklov Mathematical Institute and supported by the Russian Science Foundation under grant 14-50-00005.
}
\address{
Steklov Mathematical Institute, 8 Gubkina st., Moscow, Russia, 119991\newline
\phantom{La}National Research University Higher School of Economics, Russian Federation, AG Laboratory, HSE, 7 Vavilova str., Moscow, Russia, 117312}\email{victorprz@mi.ras.ru, costya.shramov@gmail.com}
\begin{document}

\begin{abstract}
In 1997 Batyrev, Ciocan-Fontanine, Kim, and van Straten suggested a construction of Landau--Ginzburg models for Fano complete intersections
in Grassmannians similar to Givental's construction for complete intersections in smooth toric varieties.
We show that for a Fano complete intersection in Grassmannians 
the result of the above construction is birational to a complex torus.
In other words, the complete intersections under consideration have very weak Landau--Ginzburg models.
\end{abstract}

\maketitle

\section{Introduction}

There are several versions of Mirror symmetry conjectures depending on the data one wants to operate with.
One of them, Mirror symmetry of variations of Hodge structures (see, for example,~\cite[Conjecture~38]{Prz13}),
predicts that for any smooth Fano variety $X$ of dimension $N$
there exists
a \emph{very weak Landau--Ginzburg model} $f_X$, i.\,e. a Laurent polynomial in $N$ variables
satisfying a so called \emph{period condition}. Namely,
the periods of the family $\{f_X=\lambda\mid \lambda\in \CC\}$ should be solutions of a \emph{regularized quantum differential equation} for $X$,
that is a differential equation constructed in terms of Gromov--Witten invariants of $X$ (for a bit more details and references see Section~\ref{section:preliminaries}).

So far, there is no general construction of very weak Landau--Ginzburg model. However, there are several approaches.
One of them uses
toric degenerations of the Fano variety, see~\cite{Prz13},~\cite{Prz11},~\cite{ILP13},~\cite{CCGGK12}.
In~\cite{BCFKS97} (see also~\cite{BCFKS98}) the Givental's construction of Landau--Ginzburg models for complete
intersections in smooth toric varieties was generalized to complete intersections in Grassmannians. For this well known
toric degenerations of Grassmannians themselves were used.
Instead of a Laurent polynomial $f_X$, that is a regular function on a torus $(\CC^*)^{N}$, a certain complete intersection in $(\CC^{*})^{N_0}$, $N_0>N$,
with a regular function $F_{X}$ called \emph{superpotential} is constructed.
In~\cite[Problem~16]{Prz13} it was conjectured that the complete intersection constructed in~\cite{BCFKS97} is birational to a torus
$(\CC^*)^N$ and the function constructed in~\cite{BCFKS97} corresponds under this birationality to a Laurent polynomial which
satisfies the required period condition.
In~\cite{PSh14a} we solve this problem for complete intersections in Grassmannians of planes $\mathrm{Gr}(2,k+2)$. Moreover,
we give an algorithm to construct a corresponding birational map. 
The idea is to use a monomial change of variables, after which the Laurent polynomial representing $F_{X}$ has some chosen variables placed
in denominators only. This enables one to make non-toric change of variables keeping the Laurent form of the function.
It was expected that a similar algorithm can be constructed for complete intersections in any Grassmannian as well.
We do it in this paper. Our algorithm uses the same idea as the one we applied for complete intersections in
Grassmannians of planes, but it is of a more geometric nature.

In~\cite{DH15} it was shown by other methods that for a Fano complete
intersection in a Grassmannian the Landau--Ginzburg model constructed in~\cite{BCFKS97} is birational to a torus with a regular function on it, so it can be represented by Laurent polynomial; moreover, this polynomial
is recovered from a toric degeneration. The way of constructing such polynomial from~\cite{DH15} is indirect,
and it is unclear how to establish the period condition using it.
On the contrary, our algorithm is more adapted to the case of complete intersections in Grassmannians and thus it has a simple straightforward form that, in particular, enables one to check the period condition, so the polynomials it produces
are very weak Landau--Ginzburg models.

\medskip

The paper is organized as follows. In Section~\ref{section:preliminaries} we provide definitions and constructions
we need, in particular, the definition of toric Landau--Ginzburg models and the construction
of Givental's type Landau--Ginzburg models for complete intersections in Grassmannians presented in~\cite{BCFKS97}.
In Section~\ref{section: main theorem} we prove our main theorem
(Theorem~\ref{theorem: main}) stating that
the Landau--Ginzburg models are birational to complex tori $(\C^*)^N$, and provide a formula for their superpotentials.
In Section~\ref{section: integral} we check the period condition for them;
as a corollary we get an existence of very weak Landau--Ginzburg models for
Fano complete intersections in Grassmannians 

\medskip

We are grateful to A.\,Kuznetsov whose suggestions drastically improved the paper.

\medskip

{\bf Notation and conventions.} Everything is defined over $\C$.
Given two integers $n_1$ and $n_2$, we denote the set $\{i\in \ZZ\mid n_1\le i\le n_2\}$ by $[n_1,n_2]$.
When we speak about hyperplane or hypersurface sections of a Grassmannian we mean hyperplane or hypersurface sections in its Pl\"ucker embedding.

\section{Preliminaries}
\label{section:preliminaries}

\subsection{Toric Landau--Ginzburg models}
\label{subsection:toric LG}

Let $X$ be a smooth Fano variety of Picard number $1$ and dimension $N$.
Using Gromov--Witten
invariants of $X$ (see~\cite{Ma99}) one can define a series 
$$
\widetilde{I}^X_{0}=\left(1+ \sum_{d\ge 2} d!\langle\tau_{d-2}
\fund\rangle_d \cdot t^d\right)\in \CC[[t]]
$$
called a \emph{constant term of regularized $I$-series for $X$}, see for instance~\cite{Prz08b}. This series is a
solution of \emph{regularized quantum differential equation} given by the second Dubrovin's
connection. For more details one can look at~\cite{Gi96},~\cite{Pa98},~\cite{Prz08b}, and~\cite{GS07}.
In~\cite{BCFKS97} one can find a way to write down the formula for this series when $X$ is a complete intersections in a Grassmannian;
we recover it in Theorem~\ref{theorem: periods of CI}.

Denote a constant term of a Laurent polynomial $f$ by $[f]$.
\begin{definition}[{see~\cite[\S6]{Prz13}}]
\label{definition: toric LG}
\emph{A toric Landau--Ginzburg model} of $X$ is a Laurent polynomial $\mbox{$f_X\in \CC[x_1^{\pm 1}, \ldots, x_N^{\pm 1}]$}$ which satisfies:
\begin{description}
  \item[Period condition] One has $\sum [f_X^i]t^i=\widetilde{I}^X_0$.
  \item[Calabi--Yau condition] There exists a fiberwise compactification of a family
$$f_X\colon (\CC^*)^N\to \CC$$
whose total space is a (non-compact) smooth Calabi--Yau
variety, that is a variety with trivial canonical class. Such compactification is called \emph{a Calabi--Yau compactification}.
  \item[Toric condition] There is a degeneration
   $X\rightsquigarrow T$ to a toric variety~$T$ whose fan polytope
  (i.\,e. the convex hull of generators of its rays) coincides with the Newton polytope
  (i.\,e. the convex hull of the support) of $f_X$.
\end{description}
\end{definition}

The series $\sum [f_X^i]t^i$ is a period of a family of fibers of a map given by
$f_X$, see, for instance,~\cite[Proposition 2.3]{Prz08a} or~\cite[Theorem~3.2]{CCGGK12} for a proof.
Thus the period condition is a numerical expression of coincidence of constant term of regularized $I$-series and a period of the family provided by $f_X$.
Let us remind that the Laurent polynomials for which only the period condition is satisfied are called \emph{very weak Landau--Ginzburg models};
ones for which in addition a Calabi--Yau condition holds are called \emph{weak Landau--Ginzburg models}.

Toric Landau--Ginzburg models have been constructed for Fano threefolds (see~\cite{Prz13}, \cite{ILP13}, and~\cite{DHKLP}) and
complete intersections in projective spaces (\cite{ILP13}); some other partial results are also known.
In Theorem~\ref{theorem: main} we prove that Fano complete intersections in Grassmannians have very weak Landau--Ginzburg
models. We believe that these models are in fact toric ones, see Problem~\ref{problem: toric LG}.

\subsection{BCFKS models}
\label{subsection:BCFKS}
In this subsection we describe some constructions from~\cite{BCFKS97} and~\cite{BCFKS98}
(see also~\cite[B25]{EHX97})
for a complete intersection in a Grassmannian $\G(n, k+n)$, $k,n\ge 2$, in a form presented in~\cite{PSh14a}
(cf.~\cite{PSh14b}). 

We define a quiver $\QQ$
as a set of vertices
$$
\Ver(\QQ)=\{(i,j)\mid i\in [1,k], j\in [1,n]\}\cup\{(0,1), (k,n+1)\}
$$
and a set of arrows
$\Ar(\QQ)$ described as follows.
All arrows are either \emph{vertical} or \emph{horizontal}.
For any $i\in [1,k-1]$ and any $j\in [1,n]$ there is one
vertical arrow~\mbox{$\vv_{i,j}=\arrow{(i,j)}{(i+1,j)}$} that goes from the vertex $(i,j)$
down to the vertex $(i+1,j)$. For any $i\in [1,k]$ and any $j\in [1,n-1]$
there is one horizontal
arrow~\mbox{$\hh_{i,j}=\arrow{(i,j)}{(i,j+1)}$} that goes from the vertex $(i,j)$ to the right
to the vertex $(i,j+1)$.
We
also add an extra vertical arrow~\mbox{$\vv_{0,1}=\arrow{(0,1)}{(1,1)}$}
and an extra horizontal
arrow~\mbox{$\hh_{k,n}=\arrow{(k,n)}{(k,n+1)}$} to~\mbox{$\Ar(\QQ)$},
see Figure~\ref{figure:quiverG36}.

\begin{figure}[htbp]
\begin{center}
\includegraphics[width=7cm]{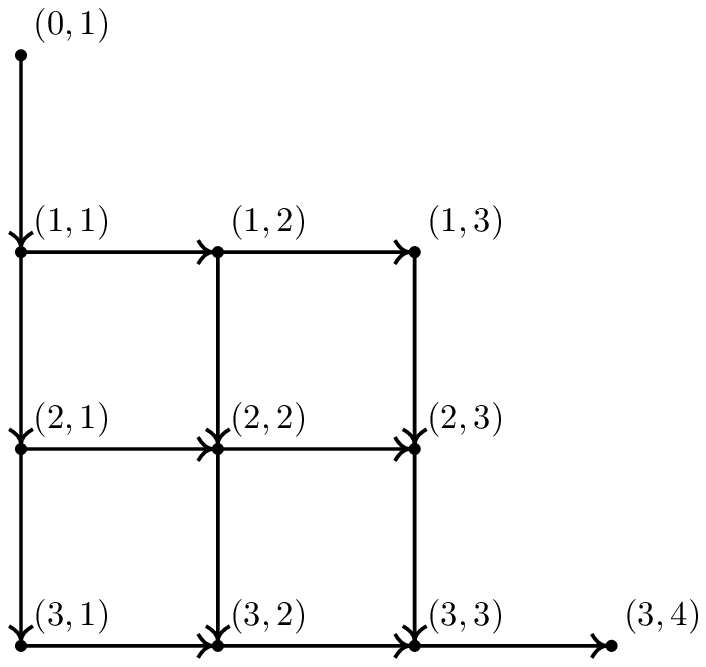}
\end{center}
\caption{Quiver $\QQ$ for Grassmannian $\G(3,6)$}
\label{figure:quiverG36}
\end{figure}

For any arrow
$$\alpha=\arrow{(i,j)}{(i',j')}\in\Ar(\QQ)$$
we define its tail $t(\alpha)$
and its head $h(\alpha)$ as the vertices $(i,j)$ and $(i',j')$, respectively.

For $r,s\in [0,k]$, $r<s$, we define a \emph{horizontal block}
$\HB(r,s)$ as a set of all vertical arrows $\vv_{i,j}$
with $i\in [r,s-1]$.
For example, the horizontal block $\HB(0,1)$ consists of a single
arrow $\vv_{0,1}$, while the horizontal block $\HB(1,3)$ consists
of all arrows $\vv_{1,j}$ and $\vv_{2,j}$, $j\in [1,n]$.
Similarly, for $r,s\in [1, n+1]$, $r<s$, we define a \emph{vertical block}
$\VB(r,s)$ as a set of all horizontal arrows $\hh_{i,j}$
with $j\in [r,s-1]$. Finally, for $r\in [0,k]$, $s\in [1,n+1]$
we define a \emph{mixed block}
$\MB(r,s)=\HB(r,k)\cup\VB(1,s)$.
For example, the mixed block $\MB(0,n)$ consists
of all arrows of $\Ar(\QQ)$ except the arrow $\hh_{k,n}$.
When we speak about a block, we mean either a horizontal,
or a vertical, or a mixed block.
We say that the \emph{size}
of a horizontal block $\HB(r,s)$ and of a vertical block $\VB(r,s)$
equals $s-r$, and the size of a mixed block $\MB(r,s)$ equals $s+k-r$.

Let $B_1,\ldots,B_l$ be blocks. We say that they are \emph{consecutive}
if the arrow $\vv_{0,1}$ is contained in $B_1$, and
for any $p\in [1,l]$ the union $B_1\cup\ldots\cup B_p$ is a block.
This happens only in one of the following
two situations: either there is an index $p_0\in [1,l]$ and sequences
of integers $0<r_1<\ldots<r_{p_0}=k$ and $0<r_1'<\ldots<r_{l-p_0}'\le n+1$
such that
\begin{multline*}B_1=\HB(0,r_1), B_2=\HB(r_1,r_2), \ldots,
B_{p_0}=\HB(r_{p_0-1}, r_{p_0}),\\
B_{p_0+1}=\VB(0, r_1'), \ldots, B_l=\VB(r_{l-p_0-1}', r_{l-p_0}'),
\end{multline*}
or there is an index $p_0\in [1,l]$ and sequences
of integers $0<r_1<\ldots<r_{p_0-1}<k$ and
$0<r_1'<\ldots<r_{l-p_0-1}'\le n+1$
such that
\begin{multline*}
B_1=\HB(0,r_1), B_2=\HB(r_1,r_2), \ldots,
B_{p_0-1}=\HB(r_{p_0-2}, r_{p_0-1}),
B_{p_0}=\MB(r_{p_0}, r_1'),\\
B_{p_0+1}=\VB(r_1',r_2'), \ldots, B_l=\VB(r_{l-p_0-2}', r_{l-p_0-1}').
\end{multline*}
The first case occurs when there are no mixed blocks among $B_1,\ldots,B_l$,
and the second case occurs when one of blocks is mixed.

Let $S=\{x_1,\ldots,x_N\}$ be a finite set. We denote the torus
$$\Spec \C [x_1^{\pm 1},\ldots, x_N^{\pm 1}]\cong (\C^*)^N$$
by $\TT(S)$. Note that $x_1,\ldots, x_N$
may be interpreted as coordinates on $\TT(S)$.

We introduce a set of variables
$\widetilde{V}=\{\widetilde{a}_{i,j}\mid i\in [1,k], j\in [1,n]\}$.
It is convenient to think that the variable $\widetilde{a}_{i,j}$
is associated to a vertex $(i,j)$ of the quiver $\QQ$.
Laurent polynomials in the variables $\widetilde{a}_{i,j}$ are
regular functions on the torus $\TT(\widetilde{V})$.
We also put $\widetilde{a}_{0,1}=\widetilde{a}_{k,n+1}=1$.

For any subset $A\subset\Ar(\QQ)$ we define
a regular function
$$\widetilde{F}_A=\sum\limits_{\alpha\in A}\frac{\widetilde{a}_{h(\alpha)}}
{\widetilde{a}_{t(\alpha)}}
$$
on the torus $\TT(\widetilde{V})$.

Let $Y$ be a complete intersection of hypersurfaces of degrees
$d_1,\ldots, d_l$ in $\G(k,n+k)$, $\sum d_i<n+k$. Consider consecutive blocks
$B_1, \ldots, B_l$ of size $d_1,\ldots, d_l$, respectively, and put
$$B_0=\Ar(\QQ)\setminus\big(B_1\cup\ldots\cup B_l\big).$$

Let $\widetilde{L}\subset\TT(\widetilde{V})$ be the subvariety
defined by equations
$$\widetilde{F}_{B_1}=\ldots=\widetilde{F}_{B_l}=1.$$
In~\cite{BCFKS97} and~\cite{BCFKS98}
it was suggested that a Landau--Ginzburg model for $Y$
is given by the variety $\widetilde{L}$ with
superpotential given by the function
$\widetilde{F}_{B_0}$.
Below we prove the following result.

\begin{theorem}\label{theorem: main}
The subvariety $\widetilde{L}$ is birational to a torus $\Y\cong(\C^*)^{nk-l}$,
and the birational equivalence $\widetilde{\tau}\colon\Y\dasharrow\widetilde{L}$
can be chosen so that
$\widetilde \tau^*\left(\widetilde F_{B_0}\right)$ is a
regular
function on~$\Y$. In particular this function is given by a Laurent polynomial.
\end{theorem}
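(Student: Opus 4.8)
The plan is to induct on the number of blocks $l$, at each step peeling off one equation $\widetilde F_{B_p}=1$ and one variable, while keeping track of which variables can be "solved for" in a way that preserves the Laurent property of $\widetilde F_{B_0}$. The key structural observation is that each block $B_p$ (horizontal, vertical, or mixed) touches a contiguous strip of vertices of the quiver, and the equation $\widetilde F_{B_p}=1$ is a sum of ratios $\widetilde a_{h(\alpha)}/\widetilde a_{t(\alpha)}$ along the arrows of that block. Because consecutive blocks share exactly one "boundary row" or "boundary column" of vertices, one can orient the elimination so that when we use $\widetilde F_{B_p}=1$ to express one variable on that boundary in terms of the others, the eliminated variable appears only in numerators (or only in denominators) of the remaining functions $\widetilde F_{B_q}$, $q\neq p$, and in particular in $\widetilde F_{B_0}$. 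This is precisely the non-toric-but-Laurent-preserving change of variables used in \cite{PSh14a}, reformulated here in terms of the block decomposition.

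First I would set up coordinates adapted to the consecutive block structure: using the two cases in the classification of consecutive blocks (no mixed block, or one mixed block), label the boundary rows $i=r_1,\dots,r_{p_0}$ and boundary columns $j=r_1',\dots$ at which consecutive blocks meet. For a horizontal block $\HB(r,s)$ the equation involves the variables $\widetilde a_{i,j}$ with $i\in[r,s]$; I would solve it for a single variable $\widetilde a_{s-1,j_0}$ (for a suitably chosen $j_0$, e.g. the one adjacent to the next block along the shared row) — note this is solving a linear equation in that variable after clearing its denominator, so the solution is a rational function, and I must check it lands in a Laurent ring after the substitution. The analogous statement holds for vertical and mixed blocks by the evident symmetry of the quiver. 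Carrying this out for $B_1,\dots,B_l$ in order eliminates $l$ of the $nk$ variables $\widetilde a_{i,j}$, leaving a torus $\Y\cong(\C^*)^{nk-l}$ and a birational map $\widetilde\tau$.

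The main obstacle — and the step that needs genuine care rather than bookkeeping — is verifying that after all $l$ successive substitutions, $\widetilde\tau^*(\widetilde F_{B_0})$ is genuinely a Laurent polynomial, i.e. that no substitution introduces a denominator that fails to cancel, and that the substitutions do not interfere destructively with one another. Concretely: when we solve $\widetilde F_{B_p}=1$ for a boundary variable, the result has a denominator (a sum of monomials coming from the arrows of $B_p$), and we must arrange that this denominator-variable never reappears in a position where the offending factor cannot be absorbed; the block structure is exactly what guarantees this, because the variable we eliminate from $B_p$ lies on the boundary between $B_p$ and at most one neighbour, so it occurs in $\widetilde F_{B_0}$ only through arrows leaving that boundary vertex, all with the same orientation. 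I would isolate this as a lemma: "eliminating the $p$-th boundary variable using $\widetilde F_{B_p}=1$ sends Laurent polynomials in which that variable occurs only in numerators to Laurent polynomials," and then check by the orientation of the arrows of $B_0$ along each boundary vertex that the hypothesis is met at every stage. A secondary, lighter obstacle is checking that each intermediate variety is still (birational to) a torus of the correct dimension — that the eliminated variable genuinely decouples — which follows because the defining equation is linear and monic in the chosen variable after clearing denominators.
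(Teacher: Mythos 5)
Your plan is the one of~\cite{PSh14a} (eliminate one variable per equation $\widetilde F_{B_p}=1$ and argue that the Laurent property survives), but as written it has a genuine gap, and the gap sits exactly at the step you flag as the main obstacle. A small local issue first: for a block of size at least $2$ the variable $\widetilde a_{s-1,j_0}$ you propose to solve for occurs in $\widetilde F_{B_p}$ both in a numerator (via $\vv_{s-2,j_0}$) and in a denominator (via $\vv_{s-1,j_0}$), so the equation is quadratic in it, not linear and monic; you would have to take a variable of the last row of the block, which occurs exactly once and only as a numerator. The serious problem is your key claim that the eliminated boundary variable occurs in $\widetilde F_{B_0}$ (and in the remaining constraints) only with one orientation, compatible with the substitution. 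For the raw coordinates $\widetilde a_{i,j}$ this is false, and already the smallest case rules out every possible choice: take a quadric section of $\G(2,4)$, so $l=1$ and $B_1=\HB(0,2)=\{\vv_{0,1},\vv_{1,1},\vv_{1,2}\}$, with $\widetilde F_{B_1}=\widetilde a_{1,1}+\widetilde a_{2,1}/\widetilde a_{1,1}+\widetilde a_{2,2}/\widetilde a_{1,2}$ and $\widetilde F_{B_0}=\widetilde a_{1,2}/\widetilde a_{1,1}+\widetilde a_{2,2}/\widetilde a_{2,1}+1/\widetilde a_{2,2}$. The only variables in which $\widetilde F_{B_1}=1$ can be solved Laurent-polynomially are $\widetilde a_{2,1}$, $\widetilde a_{2,2}$ (numerator only) and $\widetilde a_{1,2}$ (denominator only); but $\widetilde a_{2,1}$ enters $\widetilde F_{B_0}$ only through a denominator, $\widetilde a_{1,2}$ only through a numerator, and $\widetilde a_{2,2}$ through both, so each elimination leaves an uncancelled factor $1/(1-\cdots)$ in the superpotential. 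Hence no choice of a single variable to solve for can work by itself, and your proposed lemma, while true as stated, can never be applied.

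What is missing is the device that makes the orientations come out right: a preliminary monomial (toric) change of variables that rescales \emph{all} remaining variables by suitable powers of the quantity being eliminated, together with a verification that the dangerous term $\widetilde a_{k,n+1}/\widetilde a_{k,n}=1/\widetilde a_{k,n}$ picks up only nonnegative powers in the end. This is precisely what the paper's proof supplies in a global form: it assigns to each block $B_p$ nonnegative weights $\wt_p$ making $F_{B_p}$ semi-invariant of weight $w_p^{-1}$ under a free action of $T\cong(\C^*)^l$, makes $F_A$ invariant and the term $a$ semi-invariant of weight $w_1^{d_1}\cdots w_l^{d_l}$, exhibits a section by setting the weight variables to $1$ (using that the matrix $M$ is unitriangular), and then parametrizes $L$ by $\tau(y)=(\bar F_1(y),\ldots,\bar F_l(y))\cdot\sigma(y)$, so that $\tau^*F_{B_0}=\bar F_A+\bar F_1^{d_1}\cdots\bar F_l^{d_l}\,\bar a$ is Laurent because the exponents $d_p$ are nonnegative. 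In your inductive scheme this corresponds to twisting every variable of positive weight before substituting, and to an explicit bookkeeping of how the $l$ substitutions interact; neither is provided nor replaced by the boundary-orientation argument, so the proposal does not yet prove the theorem.
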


\begin{remark}\label{remark:order}
The Laurent polynomial provided by Theorem~\ref{theorem: main}
may significantly change if one takes the degrees $d_1, \ldots, d_l$
in a different order (cf. Examples~\ref{example:1121}
and~\ref{example:1112}).
\end{remark}

To prove Theorem~\ref{theorem: main}
we will use slightly more convenient coordinates than $\widetilde{a}_{i,j}$.
Make a monomial change of variables $\psi\colon\TT(V)\to\TT(V)$ defined by
\begin{equation}
\label{eq: psi}
a_{i,j}=\widetilde{a}_{i,j}\cdot \widetilde{a}_{k,n},\quad a=\widetilde{a}_{k,n}.
\end{equation}
Put
$$V=\{a_{i,j}\mid i\in [1,k], j\in [1,n], (i,j)\neq (k,n)\}\cup \{a\}.$$
Put $a_{k,n}=1$ and $a_{0,1}=a_{k,n+1}=a$ for convenience.
As above, for any subset $A\subset\Ar(\QQ)$ we define
a regular function
$$F_A=\sum\limits_{\alpha\in A}\frac{a_{h(\alpha)}}
{a_{t(\alpha)}}
$$
on the torus $\TT(V)$. Let
$L\subset\TT(V)$ be the subvariety
defined by equations
$$F_{B_1}=\ldots=F_{B_l}=1.$$
We are going to check that
the subvariety $L$ is birational to a torus $\Y\cong(\C^*)^{nk-l}$,
and the birational equivalence $\tau\colon\Y\dasharrow L$
can be chosen so that
the pull-back of $F_{B_0}$ is a regular
function on $\Y$.
Obviously, the latter assertion is equivalent to
Theorem~\ref{theorem: main}.

\section{Proof of the main result}
\label{section: main theorem}

In this section we prove Theorem~\ref{theorem: main}.

The following assertion is easy to check.

\begin{lemma}
\label{lemma:torus}
Let $\X$ be a variety with a free action of a torus $T$. Put $\Y=\X/T$,
and let $\varphi\colon \X\to \Y$ be the natural projection.
Suppose that $\varphi$ has a section $\sigma\colon \Y\to \X$. Then one has an isomorphism
\begin{equation*}
\xi\colon \X\stackrel{\sim}\to T\times \Y.
\end{equation*}
Moreover, suppose that a function $F\in H^0(\X,\O_{\X})$
is semi-invariant with respect to the $T$-action,
i.\,e. there is a character $\chi$ of $T$
such that for any $x\in \X$ and $t\in T$ one has $F(tx)=\chi(t)F(x)$.
Then there is a function $\bar{F}\in H^0(\Y,\O_{\Y})$ such that
$F=\xi^*\big(\chi\cdot\bar{F}\big)$.
\end{lemma}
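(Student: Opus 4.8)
The plan is to exhibit the isomorphism $\xi$ and its inverse explicitly. First I would define a morphism $\mu\colon T\times\Y\to\X$ by $\mu(t,y)=t\cdot\sigma(y)$; this is a morphism since it is the composition of $\sigma$ with the action map. To see that $\mu$ is bijective on points, note that $\varphi\circ\sigma=\mathrm{id}_{\Y}$, so for every $x\in\X$ the points $x$ and $\sigma(\varphi(x))$ have the same image under $\varphi$ and hence lie in a single $T$-orbit; by freeness of the action there is a \emph{unique} $t(x)\in T$ with $x=t(x)\cdot\sigma(\varphi(x))$. Thus $x\mapsto\big(t(x),\varphi(x)\big)$ is a set-theoretic inverse of $\mu$, and $\xi$ will be this map.

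The one point requiring an argument — and the part I expect to be the only mildly technical step — is that $x\mapsto t(x)$ is a morphism, not merely a map of sets. This I would deduce from the freeness of the $T$-action together with the fact that $\varphi$ is the quotient: the morphism $T\times\X\to\X\times_{\Y}\X$, $(t,x)\mapsto(t\cdot x,x)$, is an isomorphism, so there is a well-defined ``division'' morphism $\X\times_{\Y}\X\to T$ assigning to $(x_1,x_2)$ the unique $t$ with $x_1=t\cdot x_2$. Composing this with the morphism $x\mapsto\big(x,\sigma(\varphi(x))\big)$ gives $x\mapsto t(x)$ as a morphism, and hence $\xi(x)=\big(t(x),\varphi(x)\big)$ is a morphism. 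Since $\xi$ and $\mu$ are mutually inverse on points and both are morphisms, $\xi\colon\X\stackrel{\sim}{\to}T\times\Y$ is the desired isomorphism, with $\xi^{-1}=\mu$.

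For the statement about semi-invariant functions, I would simply set $\bar F=\sigma^*F\in H^0(\Y,\O_{\Y})$. Given $x\in\X$, write $t=t(x)$ and $y=\varphi(x)$, so that $x=t\cdot\sigma(y)$; semi-invariance then yields
$$
F(x)=F\big(t\cdot\sigma(y)\big)=\chi(t)\cdot F\big(\sigma(y)\big)=\chi(t)\cdot\bar F(y).
$$
On the other hand $\big(\xi^*(\chi\cdot\bar F)\big)(x)=\chi\big(t(x)\big)\cdot\bar F\big(\varphi(x)\big)$, which is the same expression, so $F=\xi^*\big(\chi\cdot\bar F\big)$. This last part is purely formal once the isomorphism $\xi$ is in hand.
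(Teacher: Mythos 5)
Your argument is correct and is exactly the standard construction the paper leaves implicit: its own proof of this lemma is just the word ``Straightforward,'' so your explicit $\mu(t,y)=t\cdot\sigma(y)$, the division morphism $\X\times_{\Y}\X\to T$, and $\bar F=\sigma^*F$ fill in precisely the details the authors omit. The only point worth flagging is that identifying $T\times\X$ with $\X\times_{\Y}\X$ uses that the free quotient is a $T$-torsor (not merely set-theoretically free); this is harmless here, since in the paper's application $\X$ is itself a torus and $T$ a subtorus acting by translations, where the division morphism exists evidently.
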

\begin{proof}
Straightforward.
\end{proof}

Recall that $B_1,\ldots,B_l$ are consecutive blocks.
In particular, the arrow $\vv_{0,1}$ is contained in~$B_1$.

We are going to define the weights $\wt_1, \ldots, \wt_l$ of the vertices
of $\QQ$ so that the following properties are satisfied.
Consider an arrow $\alpha
\in\Ar(\QQ)$.
Then
$$
\wt_p\left(h(\alpha)\right)-wt_p \left(t(\alpha)\right)=
\left\{
\begin{array}{l}
-1 \text{\ if\ } \alpha\in B_p,\\
0 \text{\ if\ } \alpha\notin B_p, \text{\ and\ } \alpha\neq\hh_{k,n}.
\end{array}
\right.
$$
Also, for any $p\in [1,l]$ we require the following properties:
\begin{itemize}
\item one has $\wt_p(i,j)\ge 0$ for all $(i,j)$;

\item one has $\wt_p(k,n)=0$, so that
$$\wt_p(k,n+1)-\wt_p(k,n)=\wt_p(k,n+1)\ge 0;$$

\item one has $\wt_p(0,1)=\wt_p(k,n+1)$.
\end{itemize}

Actually, there is only one way to assign weights
so that the above requirements are met.
Choose an index~\mbox{$p\in [1,l]$}.
If $B_p=\HB(r,s)$ is a horizontal
block, we put
$$
\wt_p(i,j)=
\left\{
\begin{array}{l}
s-i, \text{\ if\ } i\in [r,s], j\in [1,n],\\
0, \text{\ if\ } i\in [s+1,k], j\in [1,n],\\
s-r, \text{\ if\ } i\in [1,r-1], j\in [1,n], \text{\ or\ } (i,j)=(0,1).
\end{array}
\right.
$$
In particular, this gives
$\wt_p(0,1)=s-r$.
If $B_p=\MB(r,s)$ is a mixed block, we put
$$
\wt_p(i,j)=
\left\{
\begin{array}{l}
(k-i)+(s-j), \text{\ if\ } i\in [r,k], j\in [1,s],\\
k-i, \text{\ if\ } i\in [r,k], j\in [s+1,n],\\
(k-r)+(s-j), \text{\ if\ }
i\in [1,r-1], j\in [1,s], \text{\ or\ } (i,j)=(0,1),\\
k-r, \text{\ if\ } i\in [1,r-1], j\in [s+1,n].
\end{array}
\right.
$$
If $B_p=\VB(r,s)$ is a vertical block,
we put
$$
\wt_p(i,j)=
\left\{
\begin{array}{l}
s-j, \text{\ if\ } i\in [1,k], j\in [r,s],\\
s-r, \text{\ if\ } i\in [1,k], j\in [1,r-1], \text{\ or\ } (i,j)=(0,1),\\
0, \text{\ if\ } i\in [1,k], j\in [s+1,n].
\end{array}
\right.
$$
Finally, we always put $\wt_p(k,n+1)=\wt_p(0,1)$.

An example of weights assignment corresponding
to Grassmannian $\G(3,6)$ and mixed block $B=\MB(2,2)$
is given on Figure~\ref{figure:weightsG36}.
The solid arrows are ones that are
contained in $B$, while the dashed
arrows are those of $\Ar(\QQ)\setminus B$.
The weight vertex $(3,1)$ of $B$ is marked by a white circle.

\begin{figure}[htbp]
\begin{center}
\includegraphics[width=7cm]{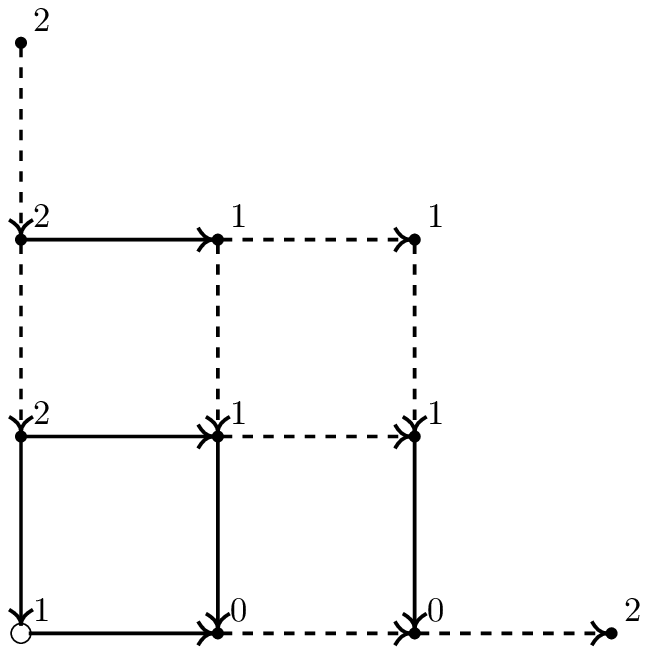}
\end{center}
\caption{Weights for Grassmannian $\G(3,6)$ and mixed block $\MB(2,2)$}
\label{figure:weightsG36}
\end{figure}

To any block $B$ we associate a \emph{weight vertex} of the quiver $\QQ$
as follows. If $B=\HB(r,s)$ is a horizontal block,
then its weight vertex is $(s-1,1)$. If $B$ is a mixed block $\MB(r,s)$ or
a vertical 
block $\VB(r,s)$, then its weight vertex is $(k,s-1)$.
If $B$ is a block and $(i,j)$ is its weight vertex, we
define the \emph{weight variable} of $B$ to be $a_{i,j}$
provided that $(i,j)\neq (0,1)$, and to be $a$ otherwise.

\begin{example}\label{example:weight-coordinates}
Consider the quiver $\QQ$ corresponding to the Grassmannian $\G(3,6)$
(see Figure~\ref{figure:quiverG36}).
Suppose that $l=4$, $B_1=\HB(0,1)$, $B_2=\HB(1,2)$, $B_3=\MB(2,2)$
and~\mbox{$B_4=\VB(2,3)$}.
Then the weight vertices of the blocks are $(0,1)$,
$(1,1)$, $(3,1)$, and $(3,2)$, respectively, and the weight variables
are $a$, $a_{1,1}$, $a_{3,1}$, and $a_{3,2}$.
\end{example}

Consider a torus
$$
\X=\TT(V)\cong(\C^*)^{nk}
$$
and a torus $T\cong (\C^*)^l$ with coordinates $w_1,\ldots,w_l$.
Define an action of $T$ on $\X$ as follows
as
$$
(w_1,\ldots, w_l)\cdot a_{i,j}=w_1^{\wt_1(i,j)}\cdot\ldots\cdot w_l^{\wt_l(i,j)}\cdot a_{i,j}
$$
for all $i\in [1,k]$, $j\in [1,n]$, $(i,j)\neq (k,n)$, and
$$
(w_1,\ldots, w_l)\cdot a=
w_1^{\wt_1(0,1)}\cdot\ldots\cdot w_l^{\wt_l(0,1)}\cdot a.
$$

Using nothing but the basic properties of weights, we obtain the following lemma.

\begin{lemma}\label{lemma:weights-Fi}
Fix $p\in [1,l]$. Then $F_{B_p}$
is a semi-invariant function on $\X$ with respect to the action of $T$
with weight $w_p^{-1}$.
\end{lemma}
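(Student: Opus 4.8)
\textbf{Proof plan for Lemma~\ref{lemma:weights-Fi}.}
The plan is to reduce the semi-invariance of $F_{B_p}$ to an arrow-by-arrow computation, using exactly the defining property of the weights $\wt_p$. Recall that $F_{B_p}=\sum_{\alpha\in B_p}\frac{a_{h(\alpha)}}{a_{t(\alpha)}}$, so it suffices to understand how each summand $\frac{a_{h(\alpha)}}{a_{t(\alpha)}}$ transforms under the $T$-action. First I would record how a single monomial $a_{i,j}$ rescales: by construction of the action, $(w_1,\ldots,w_l)\cdot a_{i,j}=\big(\prod_{q=1}^l w_q^{\wt_q(i,j)}\big)a_{i,j}$, and similarly for $a=a_{0,1}=a_{k,n+1}$, while the conventions $a_{k,n}=1$ together with $\wt_q(k,n)=0$ for all $q$ keep this formula uniform across all vertices of $\QQ$ that appear (the special vertices $(k,n)$, $(0,1)$, $(k,n+1)$ are covered by the stipulations $\wt_p(k,n)=0$ and $\wt_p(0,1)=\wt_p(k,n+1)$). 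Hence for any arrow $\alpha$,
$$
(w_1,\ldots,w_l)\cdot\frac{a_{h(\alpha)}}{a_{t(\alpha)}}
=\Big(\prod_{q=1}^l w_q^{\,\wt_q(h(\alpha))-\wt_q(t(\alpha))}\Big)\cdot\frac{a_{h(\alpha)}}{a_{t(\alpha)}}.
$$

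Next I would invoke the weight relation. For the fixed index $p$ and any arrow $\alpha\in B_p$, the defining property gives $\wt_p(h(\alpha))-\wt_p(t(\alpha))=-1$; for the other indices $q\neq p$ the same arrow $\alpha$ lies outside $B_q$, and since $B_p$ never contains $\hh_{k,n}$ (the blocks $B_1,\ldots,B_l$ are consecutive and their union omits $\hh_{k,n}$ by the description of $B_0$), the arrow $\alpha$ is distinct from $\hh_{k,n}$, so $\wt_q(h(\alpha))-\wt_q(t(\alpha))=0$. Therefore, for every $\alpha\in B_p$ the exponent vector above equals $(0,\ldots,0,-1,0,\ldots,0)$ with the $-1$ in slot $p$, i.e.
$$
(w_1,\ldots,w_l)\cdot\frac{a_{h(\alpha)}}{a_{t(\alpha)}}=w_p^{-1}\cdot\frac{a_{h(\alpha)}}{a_{t(\alpha)}}.
$$
Summing over $\alpha\in B_p$ yields $(w_1,\ldots,w_l)\cdot F_{B_p}=w_p^{-1}F_{B_p}$, which is precisely the assertion that $F_{B_p}$ is semi-invariant with character $w_p^{-1}$.

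The only genuine content to verify is that the weights as written down in the three displayed formulas (for horizontal, mixed, and vertical blocks $B_p$) really do satisfy the bulleted requirements and, in particular, the arrow relation $\wt_p(h(\alpha))-\wt_p(t(\alpha))\in\{-1,0\}$ as stated. This is a finite, entirely mechanical case check: for a horizontal block $\HB(r,s)$ one checks vertical arrows $\vv_{i,j}$ (which decrease the weight by $1$ exactly when $i\in[r,s-1]$, matching $B_p$) and horizontal arrows (on which the weight is constant in $j$), plus the extra arrows $\vv_{0,1}$ and $\hh_{k,n}$; the mixed and vertical cases are analogous, just with more regions to keep straight. I expect this bookkeeping — confirming the piecewise-linear functions glue correctly across region boundaries — to be the only mildly tedious step, but it is routine and the paper's phrase ``using nothing but the basic properties of weights'' signals that no surprise is hidden here; one may simply cite the explicit formulas already given.
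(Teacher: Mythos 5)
Your proposal is correct and matches the paper's (essentially omitted) argument: the paper derives the lemma directly from the defining relation $\wt_p(h(\alpha))-\wt_p(t(\alpha))=-1$ for $\alpha\in B_p$ and $=0$ for $\alpha\notin B_p$, $\alpha\neq\hh_{k,n}$, exactly as in your arrow-by-arrow computation (with the observation that $\hh_{k,n}\notin B_1\cup\ldots\cup B_l$, which follows from $\sum d_i<n+k$). Nothing further is needed.
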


Recall that
$$B_0=\Ar(\QQ)\setminus \big(B_1\cup\ldots\cup B_l\big).$$
Put $A=B_0\setminus\{\hh_{k,n}\}$.
Note that 
$F_{B_0}=F_{A}+a$.
We have

\begin{lemma}\label{lemma:weights-FA}
The function $F_{A}$ is invariant with respect to the action of $T$.
On the other hand, the function $a$ is semi-invariant with weight
$$\mu(w)=w_1^{d_1}\cdot\ldots\cdot w_l^{d_l}.$$
\end{lemma}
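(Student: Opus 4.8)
The plan is to verify the two assertions of Lemma~\ref{lemma:weights-FA} directly from the defining formulas for the weights. Both statements are about how the monomials $a_{h(\alpha)}/a_{t(\alpha)}$ (and the single monomial $a$) transform under the $T$-action, so the only ingredient needed is the behaviour of $\wt_p$ on arrows, which was built into the construction.

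First I would treat the invariance of $F_A$. For any arrow $\alpha\in A=B_0\setminus\{\hh_{k,n}\}$ we have $\alpha\notin B_p$ for every $p\in[1,l]$ and $\alpha\neq\hh_{k,n}$, so by the defining property of the weights, $\wt_p(h(\alpha))-\wt_p(t(\alpha))=0$ for all $p$. Therefore the exponent of each $w_p$ in the monomial $a_{h(\alpha)}/a_{t(\alpha)}$ is zero, i.e. this monomial is $T$-invariant, and hence so is the sum $F_A=\sum_{\alpha\in A} a_{h(\alpha)}/a_{t(\alpha)}$. One small point to check is that the two ``exceptional'' vertices $(0,1)$ and $(k,n+1)$, whose variable is $a$, are handled consistently: the action on $a$ uses the exponent $\wt_p(0,1)$, and the requirement $\wt_p(0,1)=\wt_p(k,n+1)$ guarantees that if an arrow in $A$ has $(0,1)$ or $(k,n+1)$ as an endpoint the cancellation still works. (In fact the only arrows touching these vertices are $\vv_{0,1}\in B_1$ and $\hh_{k,n}$, neither of which lies in $A$, so this is automatic.)

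Next I would verify the semi-invariance of the monomial $a$. By definition, $T$ acts on $a$ by $(w_1,\dots,w_l)\cdot a=\prod_p w_p^{\wt_p(0,1)}\cdot a$, so it suffices to compute $\wt_p(0,1)$ for each $p$ and check that it equals $d_p$. This is read off from the three explicit weight formulas according to the type of $B_p$. If $B_p=\HB(r,s)$ then the formula gives $\wt_p(0,1)=s-r$, which is exactly the size of $\HB(r,s)$; if $B_p=\VB(r,s)$ then $\wt_p(0,1)=s-r$, again the size; and if $B_p=\MB(r,s)$ then $\wt_p(0,1)=(k-r)+(s-j)|_{j=s}$... wait—more precisely the relevant line of the mixed-block formula (the case $(i,j)=(0,1)$) gives $\wt_p(0,1)=(k-r)+s$, hmm, which one must match against the size $s+k-r$ of $\MB(r,s)$. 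In all three cases $\wt_p(0,1)$ equals the size of $B_p$, which by hypothesis is $d_p$. Hence $\prod_p w_p^{\wt_p(0,1)}=\prod_p w_p^{d_p}=\mu(w)$, as claimed.

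There is really no hard part here: the lemma is a bookkeeping consequence of the weight construction, and the author even flags this by saying ``using nothing but the basic properties of weights''. The only place requiring a moment of care is matching the $(i,j)=(0,1)$ branch of each of the three piecewise weight formulas against the earlier definition of the size of a block (in particular getting the additive constant right in the mixed-block case, where the size is $s+k-r$ rather than $s-r$), and keeping straight the convention $a_{0,1}=a_{k,n+1}=a$ so that the single variable $a$ is the one carrying the nontrivial character while everything in $F_A$ cancels. Once those identifications are made, both statements follow immediately, and $F_{B_0}=F_A+a$ is then semi-invariant with weight $\mu(w)=w_1^{d_1}\cdots w_l^{d_l}$.
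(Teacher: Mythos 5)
Your overall strategy --- direct verification from the explicit weight formulas --- is exactly what the paper intends (it omits the proof as straightforward), and your argument for the invariance of $F_A$ is correct. The gap is in the mixed-block case of the second assertion. The branch of the mixed-block formula covering $(i,j)=(0,1)$ reads $(k-r)+(s-j)$, and at $(0,1)$ one has $j=1$, so it gives $\wt_p(0,1)=(k-r)+(s-1)$, not $(k-r)+s$ as you wrote; you forced the value to match the size $s+k-r$ quoted from the preliminaries, and that is precisely where the computation goes wrong. Moreover, the correct value $(k-r)+(s-1)$ shows that matching against that stated size formula is the wrong target anyway: $\MB(r,s)$ consists of $k-r$ rows of vertical arrows and $s-1$ columns of horizontal arrows, and it is this number $k-r+s-1$ that plays the role of $d_p$ in the paper's actual usage (in Examples~\ref{example:weight-coordinates}, \ref{example:pi-sigma} and~\ref{example:1121} the mixed block $\MB(2,2)$ for $\G(3,6)$ corresponds to the quadric, $d_p=2$, and accordingly the matrix $M$ acts on $a$ with exponent $2$ in $w_3$); the size formula ``$s+k-r$'' for mixed blocks in Section~\ref{subsection:BCFKS} is off by one from this usage. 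The clean way to finish is: for every block, $\wt_p(0,1)=\wt_p(0,1)-\wt_p(k,n)$ equals the number of arrows of $B_p$ met by any monotone path from $(0,1)$ to $(k,n)$ avoiding $\hh_{k,n}$, since each arrow of $B_p$ on such a path drops $\wt_p$ by $1$ and every other arrow leaves it unchanged; such a path crosses each row and each column of arrows of $B_p$ exactly once, so this count is the number of rows and columns comprising $B_p$, i.e. $d_p$. Hence $a$ transforms by $\prod_p w_p^{d_p}=\mu(w)$.

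A final small point: your closing sentence that $F_{B_0}=F_A+a$ is itself semi-invariant of weight $\mu(w)$ is not correct --- it is a sum of an invariant term and a term of weight $\mu(w)$, hence not semi-invariant unless $\mu$ is trivial. The lemma, and the way it is used afterwards (producing $\bar F_A$ and $\bar a$ separately), only needs the two separate statements.
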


Consider the quotient $\Y=\X/T$,
and let $\varphi\colon \X\to \Y$ be the
natural projection.
Let $x_1,\ldots, x_l$ be weight variables
of the blocks $B_1,\ldots, B_l$, respectively, and
$\Sigma\subset \X$ be the subvariety defined by
equations
$$\{x_i=1\mid i\in [1,l]\}\subset \X.$$
Note that $T$ acts on a coordinate $x_i$ multiplying
it by
$w_i\cdot N_i$, where $N_i$ is a monomial in~\mbox{$w_{i+1},\ldots, w_l$}.
In other words, define the matrix $M$
by
$$
(w_1,\ldots,w_l)\cdot x_i=\prod w_j^{M_{i,j}} x_i.
$$
Then $M$ is integral upper-triangular 
matrix
with 
units on the diagonal.
Thus $\Sigma$ has a unique common point
with any fiber of $\varphi$. Therefore, there exists a section
$\sigma\colon \Y\to \X$ of the projection $\varphi$ whose image is $\Sigma$.
Also, we see that the action of $T$ on $\X$ is free.
By Lemma~\ref{lemma:torus} we conclude that $\X\cong T\times \Y$.
In particular, one has $\Y\cong (\C^*)^{nk-l}$.

Let $V'$ be the set of all variables of
$V$ except for $x_1,\ldots, x_l$.
We regard the variables of $V$ as coordinates on $\X$,
and the variables of $V'$ as coordinates on $\Y\cong\TT(V')$.
In these coordinates the morphism $\sigma$ is given
in a particularly simple way. Namely,
for any point $y\in \Y$ the point $\sigma(y)$
has all weight coordinates equal to $1$,
and the other coordinates equal to the corresponding coordinates of $y$.

\begin{example}\label{example:pi-sigma}
In the notation of Example~\ref{example:weight-coordinates}
one has
$$
\X=\TT\big(\{a, a_{1,1}, a_{1,2}, a_{1,3}, a_{2,1}, a_{2,2}, a_{2,3}, a_{3,1}, a_{3,2}\}\big)
$$
and
$$
\Y=\TT\big(\{a_{1,2}, a_{1,3}, a_{2,1}, a_{2,2}, a_{2,3}\}\big).
$$
The action of the torus $T\cong (\C^*)^4$ is defined
by the matrix
$$
M=\left(
    \begin{array}{cccc}
      1 & 1 & 2 & 1 \\
      0 & 1 & 2 & 1 \\
      0 & 0 & 1 & 1 \\
      0 & 0 & 0 & 1 \\
    \end{array}
  \right)
$$
as
\begin{multline*}
(w_1,w_2,w_3,w_4)\colon \big(a, a_{1,1}, a_{1,2}, a_{1,3}, a_{2,1}, a_{2,2}, a_{2,3}, a_{3,1}, a_{3,2}\big)
\mapsto\\ \mapsto
\big(w_1w_2w_3^2w_4\cdot a, w_2w_3^2w_4\cdot a_{1,1},
w_2w_3w_4\cdot a_{1,2},
w_2w_3\cdot a_{1,3},\\
w_3^2w_4\cdot a_{2,1},
w_3w_4\cdot a_{2,2}, w_3\cdot a_{2,3}, w_3w_4\cdot a_{3,1},
w_4\cdot a_{3,2}\big).
\end{multline*}
(Note that the weights corresponding to the block $B_3$ can be seen on Figure~\ref{figure:weightsG36}.)
The matrix
$$
M^{-1}=\left(
    \begin{array}{rrrr}
      1 & -1 & 0 & 0 \\
      0 & 1 & -2 & 1 \\
      0 & 0 & 1 & -1 \\
      0 & 0 & 0 & 1 \\
    \end{array}
  \right)
$$
gives $w_1^{-1}=\frac{a}{a_{1,1}}$, $w_2^{-1}=\frac{a_{1,1}a_{3,2}}{a_{3,1}^2}$, $w_3^{-1}=\frac{a_{3,1}}{a_{3,2}}$, and $w_4^{-1}=a_{3,2}$, so
the projection $\varphi\colon \X\to \Y$ is given by
\begin{multline*}
\varphi\colon \left(a, a_{1,1}, a_{1,2}, a_{1,3}, a_{2,1}, a_{2,2}, a_{2,3}, a_{3,1}, a_{3,2}\right)
\mapsto\\ \mapsto
\left( \frac{a_{3,1}}{a_{1,1}a_{3,2}}\cdot a_{1,2},
\frac{a_{3,1}}{a_{1,1}}\cdot a_{1,3},
\frac{a_{3,2}}{a_{3,1}^2}\cdot a_{2,1},
\frac{1}{a_{3,1}}\cdot a_{2,2}, \frac{a_{3,2}}{a_{3,1}}\cdot a_{2,3} \right),
\end{multline*}
and the section $\sigma\colon \Y\to \X$ is given by
$$
\sigma\colon \big(a_{1,2}, a_{1,3}, a_{2,1}, a_{2,2}, a_{2,3}\big)\mapsto
\big(1, 1, a_{1,2}, a_{1,3}, a_{2,1}, a_{2,2}, a_{2,3}, 1, 1\big).
$$
\end{example}

Applying Lemma~\ref{lemma:weights-Fi} together with
Lemma~\ref{lemma:torus}, we see that there exist regular functions
$\bar{F}_p$, $p\in [1,l]$, on $\Y$ such that
under the identification $\X\cong T\times \Y$ one has
$$F_p=w_p^{-1}\cdot \varphi^*\bar{F}_p.$$
Similarly, applying Lemma~\ref{lemma:weights-FA} together with
Lemma~\ref{lemma:torus}, we see that there exist regular functions
$\bar{F}_{A}$ and $\bar{a}$ on $\Y$ such that
$F_{A}=\varphi^*\bar{F}_{A}$ and $a=\mu(w)\varphi^*\bar{a}$.

Consider a rational map
$$
y\mapsto \big(\bar{F_1}(y),\cdots,\bar{F}_l(y)\big)
$$
from $\Y$ to $T$.
Define a rational map $\tau\colon \Y\dasharrow \X$
as
$$
y\mapsto \big(\bar{F_1}(y),\cdots,\bar{F}_l(y)\big)\cdot
\sigma(y).
$$
It is easy to see that
the closure of the image of $\Y$ under the map $\tau$ is the
subvariety~\mbox{$L\subset \X$}.
In particular, $\tau$ gives a birational equivalence between
$\Y$ and $L$.

Now it remains to notice that
$$\tau^*F_{A}=\tau^*\varphi^*\bar{F}_A=\bar{F}_A.$$
On the other hand,
one has
$$\tau^*a=
\mu\big(\bar{F_1}(y),\cdots,\bar{F}_l(y)\big)\sigma^*\varphi^*\bar{a}=
\mu\big(\bar{F_1}(y),\cdots,\bar{F}_l(y)\big)\bar{a}.
$$
This means that the map
$\widetilde \tau=\tau\varphi\psi$, where $\psi$ is defined by~\eqref{eq: psi}, provides a birational map
required for Theorem~\ref{theorem: main}.

\begin{remark}
\label{remark: algorithm}
The above proof of Theorem~\ref{theorem: main} provides a very explicit
way to write down the Laurent polynomial $\tau^*F_{B_0}$. Namely,
consider a complete intersection~\mbox{$Y\subset \G(n,n+k)$}
of hypersurfaces of degrees $d_i$, $i\in [1,l]$.
The following cases may occur.
\begin{itemize}
  \item One has $d_1+\ldots+d_l\le k.$
Put $u_i=d_1+\ldots+d_i$ for $i\in [1,l]$.
Then the BCFKS Landau--Ginzburg model for $Y$ is birational to $(\CC^*)^{nk-l}$ with superpotential
$$
\sum_{i=u_{l}+1}^{k}\sum_{j=1}^n \frac{a_{i,j}}{a_{i-1,j}}+\sum_{i=1}^{k}\sum_{j=2}^n \frac{a_{i,j}}{a_{i,j-1}}
+a\left(\frac{a_{1,1}}{a}+\sum_{i=2}^{d_1}\sum_{j=1}^n \frac{a_{i,j}}{a_{i-1,j}}\right)^{d_1}
\prod_{p=2}^l
\left(\sum_{i=u_{p-1}}^{u_p}\sum_{j=1}^n \frac{a_{i,j}}{a_{i-1,j}}\right)^{d_p},
$$
where we put $a_{1,u_1-1}=1$ if $u_1>1$ and $a=1$ otherwise,
$a_{1,u_i-1}=1$ for $i\in[2,l]$, and $a_{k,n}=1$.

  \item One has $d_1+\ldots+d_l> k.$ Let $m\in [0,l-1]$
be the maximal index such that
$d_1+\ldots+d_m\le k.$
Put $u_i=d_1+\ldots+d_i$ for $i\in [1,m]$ and
$u_i=d_1+\ldots+d_i-k$ for $i\in [m+1,l]$.

If $m=0$, then the BCFKS Landau--Ginzburg model for $Y$ is birational to $(\CC^*)^{nk-l}$ with superpotential
\begin{multline*}
\sum_{i=1}^{k}\sum_{j=u_l+1}^n \frac{a_{i,j}}{a_{i,j-1}}+\\
+a\left(\frac{a_{1,1}}{a}+
\sum_{i=2}^{k}\sum_{j=1}^n \frac{a_{i,j}}{a_{i-1,j}}+
\sum_{i=1}^{k}\sum_{j=2}^{u_1} \frac{a_{i,j}}{a_{i,j-1}}\right)^{d_1}
\cdot \prod_{p=2}^{l}\left(\sum_{i=1}^k\sum_{j=u_{p-1}}^{u_{p}}
\frac{a_{i,j}}{a_{i,j-1}}\right)^{d_p}.
\end{multline*}
If $m>1$, then the BCFKS Landau--Ginzburg model for $Y$ is birational to $(\CC^*)^{nk-l}$ with superpotential
\begin{multline*}
\sum_{i=1}^{k}\sum_{j=u_l+1}^n \frac{a_{i,j}}{a_{i,j-1}}
+a\left(\frac{a_{1,1}}{a}+\sum_{i=2}^{d_1}\sum_{j=1}^n \frac{a_{i,j}}{a_{i-1,j}}\right)^{d_1}\cdot\\
\cdot \prod_{p=2}^m
\left(\sum_{i=u_{p-1}}^{u_p}\sum_{j=1}^n
\frac{a_{i,j}}{a_{i-1,j}}\right)^{d_p}\cdot
\left(\sum_{i=u_m}^{k}\sum_{j=1}^n \frac{a_{i,j}}{a_{i-1,j}}+\sum_{i=1}^k\sum_{j=2}^{u_{m+1}} \frac{a_{i,j}}{a_{i,j-1}}\right)^{d_{m+1}}\cdot \\ \cdot
\prod_{p=m+2}^{l}\left(\sum_{i=1}^{k}\sum_{j=u_{p-1}}^{u_p} \frac{a_{i,j}}{a_{i,j-1}}\right)^{d_p}
,
\end{multline*}

In both cases we put $a_{1,u_1-1}=1$ if $u_1>1$ and $a=1$ otherwise,
$a_{1,u_p-1}=1$ for $p\in[2,m]$, $a_{k,u_p-1}=1$ for $p\in [m+1,l]$, and $a_{k,n}=1$.

\end{itemize}

\end{remark}

\section{Givental's integral}
\label{section: integral}

We discuss now period integrals for Laurent polynomials given by Theorem~\ref{theorem: main}.
For more details see~\cite{PSh14a}.

Given 
an ordered
set of variables
$Z=\{z_1,\ldots,z_r\}$,
define a \emph{standard logarithmic form} as the form
\begin{equation*}
\label{eq: standard form}
\Omega(z_1,\ldots,z_r)=\Omega(Z)=\frac{1}{(2\pi \sqrt{-1})^r}\frac{dz_1}{z_1}\wedge\ldots\wedge\frac{dz_r}{z_r}.
\end{equation*}

Consider an integral
$$
\int_{\sigma} \frac{du}{u}\wedge \Omega_0
$$
for some form $\Omega_0$ dependent on the variable $u$. Let
$$\sigma=\sigma'\cap \{|u|=\varepsilon\}$$
for some cycle $\sigma'$.

It is well known that
(see, for instance,~\cite[Theorem 1.1]{ATY85}) that
$$
\frac{1}{2\pi \sqrt{-1}}\int_{\sigma} \frac{du}{u}\wedge \Omega_0=\int_{\sigma'}\left.\Omega_0\right|_{u=0}
$$
if both integrals are well defined (in particular the form $\Omega_0$ does not have a pole along~\mbox{$\{u=0\}$}).



Consider a Fano complete intersection $Y\subset G=\G(n,n+k)$ of hypersurfaces of degrees $d_i$, $i\in [1,l]$. Let
$F_i=F_{B_i}$
be polynomials defining BCFKS Landau--Ginzburg model for $Y$ and let
$F_0=F_{B_0}$ be the superpotential.

Given a torus $\TT(\{x_1,\ldots,x_r\})$ we call a cycle $\{|x_i|=\varepsilon_{i}\mid i\in [1,r]\}$ depending on some
real numbers $\varepsilon_i$
\emph{standard}. 

\begin{definition}[see~\cite{BCFKS97}]
\label{def:integral}
\emph{An (anticanonical) Givental's integral} for $Y$ is an integral
\begin{equation*}
\label{eq:restricted integral}
I_Y^0=\int_{\delta}
\frac{
\Omega (\{\widetilde a_{i,j}\})
}{\prod_{j=1}^l\left(1-\widetilde F_j\right)\cdot \left(1-t\widetilde F_0\right)} \in \CC[[t]]
\end{equation*}
for a standard cycle $\delta=\{|\widetilde a_{i,j}|=\varepsilon_{i,j}\mid i\in [1,k], j\in [1,n], \varepsilon_{i,j}\in \R_+\}$,
whose orientation 
is chosen such that $\left.I_Y^0\right|_{t=0}=1$.
\end{definition}

\begin{remark}
The integral $I_Y^0$ does not depend on numbers $\varepsilon_{i,j}$ provided they are small enough.
\end{remark}


%
%
In~$\mbox{\cite[Conjecture 5.2.3]{BCFKS97}}$ it is conjectured
that $\widetilde{I}^G_0=I_G^0$, and a formula for~$\widetilde{I}^G_0$ is provided.
This conjecture was
proved for $n=2$ in~\cite[Proposition 3.5]{BCK03} and for any~$n\ge 2$
in~\cite{MR13}.
In discussion after Conjecture~5.2.1 in~\cite{BCFKS98} it is explained
that from the latter theorems and the Quantum Lefschetz Theorem it follows that Givental's integral $I^0_{Y}$ equals $\widetilde{I}^Y_0$.
We summarize the results mentioned above as follows.

\begin{theorem}
\label{theorem: periods of CI}
Let $Y=\G(n,k+n)\cap Y_1\cap \ldots \cap Y_l$
be a smooth Fano complete intersection. Denote $d_i= \deg Y_i$ and $d_0=k+n-\sum d_i$.
Then 
$$
\widetilde{I}_0^Y=I^0_Y=\sum_{d\ge 0}\sum_{s_{i,j}\ge 0} \frac{\prod_{i=0}^l (d_id)!}{(d!)^{k+n}} \prod_{i=1}^{k-1} \prod_{j=1}^{n-1} \binom{s_{i+1,j}}{s_{i,j}} \binom{s_{i,j+1}}{s_{i,j}} t^{d_0d},
$$
where we put $s_{k,j}=s_{i,n}=d$. 
\end{theorem}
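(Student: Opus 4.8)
\textbf{Proof plan for Theorem~\ref{theorem: periods of CI}.}
The plan is to assemble this statement from the already-cited ingredients rather than to reprove any of them. The first step is to invoke the equality $\widetilde I_0^G = I_G^0$ for the Grassmannian $G=\G(n,k+n)$ itself: this is Conjecture~5.2.3 of~\cite{BCFKS97}, proved for $n=2$ in~\cite{BCK03} and in general in~\cite{MR13}, and it also supplies a closed formula for $\widetilde I_0^G$ in terms of the combinatorial sum over the $s_{i,j}$. Concretely, for $l=0$ (so $d_0=k+n$) the right-hand side of the asserted identity is exactly that known formula, so the case $l=0$ is nothing but a restatement of these theorems; I would just quote it and fix notation (setting $s_{k,j}=s_{i,n}=d$, matching the quiver combinatorics of Section~\ref{subsection:BCFKS}).

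The second step handles the passage from $G$ to the complete intersection $Y$. On the $I$-series side, the Quantum Lefschetz Theorem of Givental (and Coates--Givental) expresses the regularized $I$-series of a Fano complete intersection $Y=G\cap Y_1\cap\cdots\cap Y_l$ in terms of that of the ambient $G$: roughly, one multiplies the term of degree $d$ by $\prod_{i=1}^l (d_i d)!/(d!)^{d_i}$ coming from the twist by $\bigoplus \O(d_i)$, and one reparametrizes so that $t^d \mapsto t^{d_0 d}$ with $d_0 = k+n-\sum d_i$ the Fano index contribution. Carrying this out on the known formula for $\widetilde I_0^G$ produces precisely the stated formula with $\prod_{i=0}^l (d_i d)!/(d!)^{k+n}$ in front (the $i=0$ factor $(d_0 d)!$ and the $d!$-powers being bookkept exactly so that the total matches). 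On the integral side, the analogous statement is that the Givental integral $I_Y^0$ of Definition~\ref{def:integral} is obtained from $I_G^0$ by inserting the factors $1/\prod_{j=1}^l(1-\widetilde F_j)$; this is exactly the content of the discussion after Conjecture~5.2.1 in~\cite{BCFKS98}, where it is explained that Quantum Lefschetz together with the $n\ge 2$ theorem yields $I_Y^0=\widetilde I_Y^0$. So the middle of the proof is: apply Quantum Lefschetz to $\widetilde I_0^G$, apply the residue/restriction manipulation of the introductory paragraph of Section~\ref{section: integral} (the formula $\frac{1}{2\pi\sqrt{-1}}\int_\sigma \frac{du}{u}\wedge\Omega_0 = \int_{\sigma'}\Omega_0|_{u=0}$, iterated) to identify $I_Y^0$ with the combinatorial sum, and observe the two outputs agree.

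The third step is purely a consistency/bookkeeping check: verify that the exponents of $d!$ in the denominator really total $k+n$ after Quantum Lefschetz, that the binomial factors $\binom{s_{i+1,j}}{s_{i,j}}\binom{s_{i,j+1}}{s_{i,j}}$ are unaffected by the twist (they come from the Grassmannian part and are inherited verbatim), and that the $t$-grading is $t^{d_0 d}$ rather than $t^d$. I would also note that smoothness and the Fano condition $\sum d_i < k+n$ are exactly what is needed for Quantum Lefschetz to apply in the naive (untwisted-mirror) form without correction terms, so that no change of variables in $t$ beyond the linear rescaling is required.

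\textbf{Main obstacle.} The genuine mathematical content — the equality $\widetilde I_0^G = I_G^0$ — is not something to be proved here; it is imported from~\cite{MR13} (and~\cite{BCK03}), and the Quantum Lefschetz step is imported from~\cite{BCFKS98}. Thus the only real work is the combinatorial normalization: making sure that after specializing $s_{k,j}=s_{i,n}=d$ and applying the hypergeometric modification factor $\prod_{i=1}^l (d_i d)!/(d!)^{d_i}$ the prefactor collapses to the symmetric expression $\prod_{i=0}^l(d_i d)!/(d!)^{k+n}$. This is routine but error-prone (one must be careful that the ``$d!$'' powers attributed to the ambient Grassmannian, which in the raw formula for $\widetilde I_0^G$ appear as $(d!)^{k+n}$ already, are not double-counted against the $(d!)^{d_i}$ introduced by each twist), so I would present that identity of generating-function coefficients carefully and leave the rest as citations.
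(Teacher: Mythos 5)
Your proposal follows the same route as the paper: the paper itself gives no independent proof but explicitly presents this theorem as a summary of the cited results, namely the equality $\widetilde{I}^G_0=I^0_G$ from~\cite{BCK03} (for $n=2$) and~\cite{MR13} (general $n$), combined with the Quantum Lefschetz argument in the discussion after Conjecture~5.2.1 of~\cite{BCFKS98} to pass to the complete intersection $Y$. Your additional bookkeeping of the hypergeometric factors and the $t^{d_0d}$-grading is consistent with the stated formula, so the proposal is correct and essentially identical in approach.
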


It turns out that changes of variables constructed in Theorem~\ref{theorem: main} preserve this period.

\begin{proposition}
\label{proposition:periods}
The period condition holds for Laurent polynomials given by Theorem~\ref{theorem: main}.
In other words, Theorem~\ref{theorem: main} provides very weak Landau--Ginzburg models for Fano complete intersections
in Grassmannians. 
\end{proposition}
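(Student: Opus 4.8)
\textbf{Proof plan for Proposition~\ref{proposition:periods}.}
The strategy is to track the Givental integral $I_Y^0$ through each elementary step of the birational change of variables built in the proof of Theorem~\ref{theorem: main}, and to verify that at every stage the integral is unchanged (up to the choice of orientation normalizing the $t=0$ value to $1$). Once this is done, the integral computing the period of the Laurent polynomial $\tau^*F_{B_0}$ on $\Y$ coincides with $I_Y^0=\widetilde I_0^Y$ by Theorem~\ref{theorem: periods of CI}, which together with the general fact that $\sum[f^i]t^i$ equals the period of the family of fibers of $f$ (quoted in Section~\ref{subsection:toric LG} via~\cite[Proposition 2.3]{Prz08a} or~\cite[Theorem 3.2]{CCGGK12}) gives precisely the period condition $\sum[(\tau^*F_{B_0})^i]t^i=\widetilde I_0^Y$.

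First I would recast the defining integral. The constraints $\widetilde F_j=1$ cutting out $\widetilde L$ are encoded in the denominator $\prod_{j=1}^l(1-\widetilde F_j)$, and expanding each factor as a geometric series $\sum_{m_j\ge0}\widetilde F_j^{m_j}$ and likewise $(1-t\widetilde F_0)^{-1}=\sum_{d\ge0}t^d\widetilde F_0^d$ turns $I_Y^0$ into a generating series whose coefficients are residue-type integrals of monomials over the standard torus $\delta$; these are just constant-term extractions. The monomial change $\psi$ of~\eqref{eq: psi} is a toric automorphism of $\TT(V)$, so it preserves the standard logarithmic form $\Omega$ and maps the standard cycle to a standard cycle, hence does not change the integral; thus one may work with the $a_{i,j}$, $a$ variables and with $F_j$, $F_0=F_A+a$ throughout.

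The heart of the matter is the non-monomial step. In the proof of Theorem~\ref{theorem: main} the torus $\X=\TT(V)$ splits as $T\times\Y$ via the section $\Sigma=\{x_i=1\}$, and $\tau$ is the composite of this splitting with the substitution $w_p\mapsto\bar F_p(y)$, i.e. one integrates out the $l$ ``weight variables''. Concretely, $\Omega(\{a_{i,j}\})$ factors (up to sign) as $\Omega(w_1,\ldots,w_l)\wedge\varphi^*\Omega(V')$ because $M$ is integral unipotent upper-triangular with determinant $1$; and since $F_j=w_j^{-1}\varphi^*\bar F_j$, the substitution $w_j=\bar F_j(y)$ is exactly the one that makes each factor $(1-\widetilde F_j)^{-1}=(1-w_j^{-1}\varphi^*\bar F_j)^{-1}$ have a simple pole at $w_j=0$ with the right residue. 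Iterating the Leray/residue identity quoted from~\cite[Theorem 1.1]{ATY85} (applied once per variable $w_1,\ldots,w_l$, peeling them off in the order dictated by the triangularity of $M$) replaces the $l$-fold torus integral over the $w_j$ by the evaluation at $w_j=0$, i.e. by restriction to $\Sigma$, and simultaneously kills the factors $\prod_j(1-\widetilde F_j)$. What remains is $\int_{\delta'}\Omega(V')/(1-t\,\sigma^*F_0)$ over a standard cycle $\delta'$ in $\Y$, where $\sigma^*F_0=\sigma^*F_A+\sigma^*a=\bar F_A+\mu(\bar F_1,\ldots,\bar F_l)\bar a=\tau^*F_{B_0}$ by the formulas displayed at the end of Section~\ref{section: main theorem}. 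This last integral is by definition $\sum[(\tau^*F_{B_0})^i]t^i$.

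The main obstacle I anticipate is bookkeeping rather than conceptual: one must check that the iterated application of the residue formula is legitimate, i.e. that at each stage the relevant form has no pole along $\{w_p=0\}$ and both integrals are well-defined, and that the contour $\delta$ (a product of small circles $|\widetilde a_{i,j}|=\varepsilon_{i,j}$) genuinely decomposes, after the unipotent linear change to the $w_p$ and $V'$ coordinates, into a product of a small-circle cycle in each $w_p$ and a standard cycle in $\Y$ — with the radii chosen small enough, in the order $w_1,\ldots,w_l$, so that each successive residue is taken at a genuinely isolated value and the remaining integrand stays holomorphic there. Once the compatibility of contours and the order of residue extraction is pinned down (the triangularity of $M$ and the nonnegativity of the weights $\wt_p$ are exactly what guarantees it), the rest is the formal manipulation of geometric series and the identification $\sigma^*F_0=\tau^*F_{B_0}$ already carried out in the previous section; I would then simply invoke Theorem~\ref{theorem: periods of CI} to conclude.
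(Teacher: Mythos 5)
Your overall route is the same as the paper's: pass to the coordinates $(w_1,\ldots,w_l)\cup V'$ via the unipotent matrix $M$, integrate out the $w$-variables by the residue identity of~\cite[Theorem 1.1]{ATY85}, identify what is left with $\int\Omega(V')/(1-t\,\tau^*F_{B_0})$, and conclude by Theorem~\ref{theorem: periods of CI}. But the description of the key step is wrong as written. After combining $\frac{dw_j}{w_j}$ with the factor $(1-\bar F_j/w_j)^{-1}$ one gets $\frac{dw_j}{w_j-\bar F_j}$, whose pole sits at $w_j=\bar F_j$, \emph{not} at $w_j=0$; the form is regular at $w_j=0$ and, since $\frac{w_j}{w_j-\bar F_j}$ and $\mu(w)$ both vanish there, literal ``evaluation at $w_j=0$'' of the remaining integrand would give $0$. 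Likewise ``restriction to $\Sigma$'' means $w_j=1$, and $\sigma^*F_{B_0}=\bar F_A+\bar a$, which is not the superpotential; your displayed chain $\sigma^*F_0=\sigma^*F_A+\sigma^*a=\bar F_A+\mu(\bar F_1,\ldots,\bar F_l)\bar a$ contains the false equality $\sigma^*a=\mu(\bar F_1,\ldots,\bar F_l)\bar a$ (the correct statement is $\tau^*a=\mu(\bar F_1,\ldots,\bar F_l)\bar a$). So the residue extraction does not land you on the section $\Sigma$: it lands you on the locus $w_j=\bar F_j$, which is exactly the translate of $\Sigma$ defining $\tau$.

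The fix is precisely what the paper does: substitute $u_j=w_j-\bar F_j$, so that each factor becomes $\frac{du_j}{u_j}$ and~\cite[Theorem 1.1]{ATY85} applies at $u_j=0$ (with an appropriate cycle $\Delta'$, cf.~\cite[proof of Proposition 10.5]{PSh14a}); the evaluation at $u_j=0$ replaces $w_j$ by $\bar F_j$ in $\bar F=\bar F_A+\mu(w)\bar a$ and produces $\tau^*F_{B_0}$. Alternatively, your geometric-series/constant-term reading can be made correct, but only if you keep track of the positive powers of $w_j$ coming from $\mu(w)$ inside $(1-t\bar F)^{-1}$ and match them against the negative powers from $(1-\bar F_j/w_j)^{-1}$ — the matching of exponents is again equivalent to the substitution $w_j\mapsto\bar F_j$, and it also forces the contour condition $|\bar F_j|<|w_j|$ rather than arbitrarily small $|w_j|$, which contradicts the ``radii chosen small enough'' prescription at the end of your plan. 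With the shift to the $u_j$ (or the corrected bookkeeping) inserted, the rest of your argument — invariance under the monomial change $\psi$, the factorization of $\Omega$ via $\det M=1$, and the final appeal to Theorem~\ref{theorem: periods of CI} — matches the paper's proof.
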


\begin{proof}
We follow the notation from Theorem~\ref{theorem: main}.
A toric change of variables $\varphi\psi$ change coordinates $\{\widetilde a_{i,j}\}$ by coordinates $\{w_i\}\cup V'$.
One gets
\begin{multline*}
I_Y^0=\int_{\delta}
\frac{
\Omega (\{\widetilde a_{i,j}\})
}{\prod_{j=1}^l\left(1-\widetilde F_j\right)\cdot \left(1-t\widetilde F_0\right)} =\\=
\int_{\delta'} \Omega(V')
\wedge\left(\bigwedge_{j=1}^l\left(\frac{1}{2\pi \sqrt{-1}}\frac{dw_j}{w_j\cdot\left(1-\bar F_{j}/w_j\right)}\right)\right)\cdot \frac{1}{1-t
\bar F}
\end{multline*}
for an appropriate choice of an orientation on $\delta'$, where $\bar F= \bar{F}_{A}+\mu(w)\cdot \bar{a}$.
Following the birational isomorphism $\tau$, consider variables $u_i=w_i-\bar F_{i}$ instead of $w_i$.
Then, after appropriate choice of cycle $\Delta'$ (cf.~\cite[proof of Proposition 10.5]{PSh14a}) one gets
\begin{multline*}
I_Y^0=
\int_{\delta'} \Omega(V')
\wedge \left(\bigwedge_{j=1}^l\left(\frac{1}{2\pi \sqrt{-1}}\frac{dw_j}{w_j-\bar F_{j}}\right)\right)\cdot \frac{1}{1-t \bar F}=\\=
\int_{\Delta'} \Omega(V')
\wedge\left(\bigwedge_{j=1}^l\left(\frac{1}{2\pi \sqrt{-1}}\frac{du_j}{u_j}\right)\right)\cdot \frac{1}{1-t F_u}=
\int_{\Delta} \frac{
\Omega(V')
}{1-t f}=\sum [f^i]t^i,
\end{multline*}
where $\Delta$ is a projection of $\Delta'$ on $\TT(V)$ and $F_u$ is a result of replacement of $w_i$ by
of $u_i+F_{B_i}$ in $\bar F$.
\end{proof}

Fano complete intersections in projective spaces are proven to have toric Landau--Ginzburg models, see~\cite{Prz13} and~\cite{ILP13}.
In other words, it is proven that very weak Landau--Ginzburg models for complete intersections have Calabi--Yau compactifications,
and these very weak Landau--Ginzburg models correspond to toric degenerations of complete intersections.
Using the Calabi--Yau compactifications in~\cite{PSh15} a particular Hodge number of the complete intersections is computed in terms
of Landau--Ginzburg models.
The natural problem is to extend these results to complete intersections in Grassmannians.

\begin{problem}[{cf.~\cite[Problem 17]{Prz13}}]
\label{problem: toric LG}
Let $Y$ be a Fano complete intersection in $\G(n, k+n)$, and let $f_Y$ be the Laurent polynomial for $Y$ given by Theorem~\ref{theorem: main}. Prove that $f_Y$ is a toric Landau--Ginzburg model. Prove that the number of components
of a central fiber of a Calabi--Yau compactification for $f_Y$ is equal to $h^{1,\mathrm{dim}\,Y-1}(Y)+1$.
\end{problem}


\begin{example}\label{example:1121}
Let $Y$ be a smooth
intersection of the Grassmannian $\G(3,6)$ with
a quadric and three hyperplanes.
Put $l=4$, $d_1=d_2=d_4=1$, and $d_3=2$.
The BCFKS Landau--Ginzburg model in this case
is birational to a torus
$$
\Y\cong \TT\big(\{a_{1,2}, a_{1,3}, a_{2,1}, a_{2,2}, a_{2,3}\}\big)
$$
with a superpotential
$$
f_Y=\left(a_{2,1}+\frac{a_{2,2}}{a_{1,2}}+\frac{a_{2,3}}{a_{1,3}}\right)\cdot
\left(\frac{1}{a_{2,1}}+\frac{a_{3,2}}{a_{2,2}}+
\frac{1}{a_{2,3}}+a_{1,2}+\frac{a_{2,2}}{a_{2,1}}+1\right)^2\cdot
\left(\frac{a_{1,3}}{a_{1,2}}+\frac{a_{2,3}}{a_{2,2}}+1\right)
$$
given by Remark~\ref{remark: algorithm}.
By
Theorem~\ref{theorem: periods of CI} (see also~\cite[Example~5.2.2]{BCFKS97}) one has
\begin{multline}\label{eq:const-term-1121}
I^0_Y=\sum_{d, b_1,b_2,b_3,b_4}\frac{(2d)!}{(d!)^{2}} \binom{b_2}{b_1}  \binom{b_3}{b_1}  \binom{d}{b_2}  \binom{b_4}{b_2}
 \binom{b_4}{b_3}  \binom{d}{b_3}  \binom{d}{b_4}^2t^{d}=\\
 =1+12t+756t^2+78960t^3+10451700t^4+1587790512t^5+263964176784t^6+\\
 +46763681545152t^7+8685492699286260t^8+\cdots.
\end{multline}
One can check that
the first few terms we write down on the right
hand side of~\eqref{eq:const-term-1121}
equal the first few terms of the series~\mbox{$\sum [f_Y^i]t^i$}.
\end{example}

\begin{example}\label{example:1112}
Let $Y$ be a smooth
intersection of the Grassmannian $\G(3,6)$ with
a quadric and three hyperplanes, i.\,e. the variety
that was already considered in Example~\ref{example:1121}.

Put $l=4$, $d_1=d_2=d_3=1$, and $d_4=2$.
One has
$$
\Y=\TT\big(\{a_{1,2}, a_{1,3}, a_{2,2}, a_{2,3}, a_{3,1}\}\big).
$$
By Remark~\ref{remark: algorithm} we get
$$
f_Y=\left(1+\frac{a_{2,2}}{a_{1,2}}+\frac{a_{2,3}}{a_{1,3}}\right)\cdot
\left(a_{3,1}+\frac{1}{a_{2,2}}+\frac{1}{a_{2,3}}\right)\cdot
\left(a_{1,2}+a_{2,2}+\frac{1}{a_{3,1}}+\frac{a_{1,3}}{a_{1,2}}
+\frac{a_{2,3}}{a_{2,2}}+1\right)^2.
$$
One can check that the first few constant terms $[f_Y^i]$
coincide with the first few terms of the series presented on the right
side of~\eqref{eq:const-term-1121}.
Note that the Laurent polynomial
$f_Y$ can't be obtained from the polynomial from Example~\ref{example:1121}
by monomial change of variables (cf. Remark~\ref{remark:order}).
It would be interesting to find out if
these two Laurent polynomials
are mutational equivalent (cf.~\cite[Theorem~2.24]{DH15}).
\end{example}

\end{document}